\newtheorem{lemma}{Lemma}
\newtheorem{theorem}{Theorem}
\newtheorem{prop}{Proposition}
\newtheorem{remark}{Remark}
\newcommand{\e}{\begin{equation}}
\newcommand{\ee}{\end{equation}}
\newcommand{\en}{\begin{equation*}}
\newcommand{\een}{\end{equation*}}
\newcommand{\eqn}{\begin{eqnarray}}
\newcommand{\eeqn}{\end{eqnarray}}
\newcommand{\bmat}{\begin{bmatrix}}
\newcommand{\emat}{\end{bmatrix}}
\newcommand{\BIT}{\begin{itemize}}
\newcommand{\EIT}{\end{itemize}}
\newcommand{\mrm}{\mathrm}
\newcommand{\mbb}{\mathbb}
\newcommand{\argmin}{\mathop{\rm argmin}}
\newcommand{\va}{\bm a}
\newcommand{\vb}{\bm b}
\newcommand{\vm}{\bm m}
\newcommand{\vq}{\bm q}
\newcommand{\vs}{\bm s}
\newcommand{\vu}{\bm u}
\newcommand{\vv}{\bm v}
\newcommand{\vw}{\bm w}
\newcommand{\vx}{\bm x}
\newcommand{\vy}{\bm y}
\newcommand{\vz}{\bm z}
\newcommand{\mA}{\bm A}
\newcommand{\mB}{\bm B}
\newcommand{\mD}{\bm D}
\newcommand{\mF}{\bm F}
\newcommand{\mH}{\bm H}
\newcommand{\mI}{\bm I}
\newcommand{\mP}{\bm P}
\newcommand{\mQ}{\bm Q}
\newcommand{\mS}{\bm S}
\newcommand{\mT}{\bm T}
\newcommand{\mU}{\bm U}
\newcommand{\mW}{\bm W}
\newcommand{\mX}{\bm X}
\newcommand{\mPhi}{\bm \Phi}
\newcommand{\SetL}{\mathcal L}
\newcommand{\SetP}{\mathcal P}
\newcommand{\SetT}{\mathcal T}
\newcommand{\SetZ}{\mathcal Z}
\newcounter{oursection}
\theoremstyle{plain} 
\newtheorem{thm}{Theorem}[section] 
\newcommand{\thistheoremname}{}
\newtheorem{genericthm}[thm]{\thistheoremname}
\newtheorem*{genericthm*}{\thistheoremname}
\newenvironment{namedthm*}[1]
  {\renewcommand{\thistheoremname}{#1}%
   \begin{genericthm*}}
  {\end{genericthm*}}
\definecolor{darkred}{rgb}{0.6,0,0}
\definecolor{darkgreen}{rgb}{0,0.5,0}
\definecolor{darkblue}{rgb}{0,0,0.5}
\definecolor{goldenrod}{rgb}{0.85, 0.65, 0.13}
\definecolor{goldenbrown}{rgb}{0.6, 0.4, 0.08}
\pgfplotsset{compat=1.5.1}
\newcommand{\cb}{\color{black}}
\newcommand{\Mcb}{\color{black}}
\newcommand{\MRcb}{\color{black}}
\long\def\red#1{\bgroup\color{black}#1\egroup}
\begin{document}
%
\title{A Complex Quasi-Newton Proximal Method for Image Reconstruction in Compressed Sensing MRI}
%
%
%

\author{Tao Hong, Luis Hernandez-Garcia, and Jeffrey A. Fessler, \IEEEmembership{Fellow, IEEE}
\thanks{T. Hong and L. Hernandez-Garcia are with the Department of Radiology, University of Michigan, Ann Arbor, MI 48109, USA (Email: {\{tahong, hernan\}@umich.edu}).}
\thanks{J. Fessler is with the Department of Electrical and Computer Engineering, University of Michigan, Ann Arbor, MI 48109, USA (Email: {fessler@umich.edu}).}
}

%
%

\markboth{}
{Shell \MakeLowercase{\mrmit{et al.}}: Bare Demo of IEEEtran.cls for IEEE Journals}
%


\maketitle
\begin{abstract}
Model-based methods are widely used for reconstruction
in compressed sensing (CS) magnetic resonance imaging (MRI),
using regularizers to describe the images of interest.
The reconstruction process is equivalent to solving a composite optimization problem.
Accelerated proximal methods (APMs) are very popular approaches for such problems.
This paper proposes a complex quasi-Newton proximal method (CQNPM)
for the wavelet and total variation based CS MRI reconstruction.
Compared with APMs,
CQNPM requires fewer iterations to converge
but needs to compute a more challenging proximal mapping
called weighted proximal mapping (WPM).
To make CQNPM more practical,
we propose efficient methods to solve the related WPM.
Numerical experiments {\MRcb on reconstructing non-Cartesian MRI data}
demonstrate the effectiveness and efficiency of CQNPM.
\end{abstract}
\begin{IEEEkeywords}
Compressed sensing, magnetic resonance imaging (MRI), non-Cartesian trajectory, sparsity, wavelets, total variation, second-order.
\end{IEEEkeywords}

%
\IEEEpeerreviewmaketitle

\section{Introduction}

\IEEEPARstart{M}{AGNETIC} resonance imaging (MRI)  scanners acquire samples of the Fourier transform
(known as k-space data) of the image of interest.
However, MRI is slow
since the speed of acquiring k-space data is limited by many constraints,
e.g., hardware, physics, and physiology etc.
Improving the acquisition speed is crucial for many MRI applications.
Lustig et al. \cite{lustig2007sparse} proposed a technique called compressed sensing (CS) MRI
that improves the imaging speed significantly.
CS MRI allows one to get an image of interest from undersampling data
by solving the following composite optimization problem:
\begin{equation}
  \vx^*=\arg\min_{\vx\in\mathbb C^N}
  \underbrace{\frac{1}{2}\|\mA\vx-\vy\|_2^2}_{f(\vx)}
  + \lambda \, h(\vx),
  \label{eq:mainMin}
\end{equation}
where $\mA\in\mathbb{C}^{ML\times N}$ denotes the forward model
describing a mapping from the latent image $\vx$
to the acquired k-space data $\vy\in \mathbb C^{ML}$,
$h(\vx)$ is the regularizer that provides some prior assumptions about $\vx$,
$L\geq 1$ denotes the number of coils,
and $\lambda>0$ is a tradeoff parameter to balance $f(\vx)$ and $h(\vx)$.
 We note that $\mA$ consists of $L$ different
submatrices
$\mA_l = \mPhi\mF\mS_l \in \mathbb C^{M\times N}$ for $l=1,2,\ldots,L$,
where $\mPhi$ denotes the downsampling mask,
$\mF$ represents the nonuniform fast Fourier transform that depends on the sampling trajectory,
and $\mS_l$ is a diagonal matrix involving the sensitivity map for the $l$th coil which differs for each scan. 

Sparsity plays a key role in the success of CS MRI.
In general, MR images are not sparse but they can be sparsely represented under some transforms,
e.g., total variation (TV) \cite{lustig2007sparse}, wavelets \cite{guerquin2011fast},
and transform-learning \cite{ravishankar2011mr} etc.
Recently, more advanced priors or frameworks {\cb were} introduced for CS MRI reconstruction,
such as low-rank \cite{dong2014compressive},
plug and play \cite{venkatakrishnan2013plug,ahmad2020plug},
model-based deep learning \cite{aggarwal2018modl},
score-based generative models \cite{song2021solving},
to name a few.
{\cb Although deep learning based reconstruction methods
have shown better performance than classical priors like TV and wavelet
when trained with sufficient data,
Gu et al. \cite{gu2022revisiting} recently found that
suitably trained wavelet regularizers can also achieve comparable performance,
demonstrating the power of the classical regularizers.
Following Lustig et al. work in \cite{lustig2007sparse},
we consider both wavelet and TV regularizers for CS MRI reconstruction, i.e.,}
we address the following composite minimization problem
for image reconstruction in CS MRI:
\begin{equation}
  \vx^*=\arg\min_{\vx\in\mathbb C^N}
  \frac{1}{2}\|\mA\vx-\vy\|_2^2
  +\lambda \big[\alpha\|\mT\vx\|_1
  +(1-\alpha)\mrm{TV}(\vx)\big]
  ,\label{eq:mainMinWavTV}
\end{equation}
where $\mT$ and $\|\cdot\|_1$ denote a general wavelet transform and $\ell_1$ norm,
$\mrm{TV}(\cdot)$ represents the TV function
(see definition in \Cref{sec:pre:definationTV}),
and $\alpha\in[0,1]$ is used to balance the wavelet and TV terms.
For $\alpha=1$ (respectively, $\alpha=0$),
\eqref{eq:mainMinWavTV} becomes the wavelet (respectively, TV) based CS MRI reconstruction.
Since $\ell_1$ and TV functions are nonsmooth,
accelerated proximal methods (APMs) \cite{parikh2014proximal},
which have the optimal convergence rate $\mathcal O(1/k^2)$
where $k$ is the number of iterations,
are very popular algorithms for \eqref{eq:mainMinWavTV}.
In \cite{beck2009fast},
Beck et al. proposed a fast iterative shrinkage-thresholding algorithm (FISTA)
(a specific type of APMs)
for wavelet-based image reconstruction
and showed a closed-form solution for the related proximal mapping \cite{parikh2014proximal}.
Beck et al. \cite{beck2009fastTV} extended FISTA
to solve TV-based image reconstruction
and suggested a fast dual gradient descent method to compute the proximal mapping.
{\cb Primal-dual methods \cite{chambolle2011first} are also appealing methods
for composite problems.
The work in \cite{condat2013primal} showed that primal-dual methods
can also achieve the optimal convergence rate
and showed their connection to proximal methods.
However, primal-dual methods have to tune parameters
that affect the practical convergence rates and such tuning is nontrivial.
For a review of different variants of primal-dual methods,
see \cite{komodakis2015playing}.} 
For more optimization methods and the use of different regularizers
for reconstruction in CS MRI,
see \cite{fessler2020optimization}.

Modern MR images are typically acquired using multiple receiver coils and non-Cartesian trajectories,
resulting in an expensive forward process from the image to the k-space domains and ill-conditioned or under-determined $\mA$ \cite{fessler2020optimization}.
An ill-conditioned $\mA$ can lead to slow reconstruction \cite{iyer2022accelerating}.
To accelerate the recovery process, some preconditioning techniques have been introduced.
In \cite{ong2019accelerating}, Ong et al. proposed a diagonal matrix $\tilde{\mD}$ as a preconditioner
such that they solved the following problem instead of \eqref{eq:mainMin}:
\begin{equation}
  \vx^*=\arg\min_{\vx\in\mathbb C^N}
  \frac{1}{2} \|\tilde{\mD}^{\frac{1}{2}}\left(\mA\vx-\vy\right)\|_2^2
  +\lambda h(\vx)
  .\label{eq:mainMin:DiagPre}
\end{equation}
Recently, Iyer et al. \cite{iyer2022accelerating} developed more effective polynomial preconditioners
than $\tilde{\mD}$,
based on Chebyshev polynomials.
Although \cite{iyer2022accelerating} showed promising results for practical reconstruction,
adding such a preconditioner changes the incoherence of $\mA$, which breaks the original theoretical guarantee.
For $\alpha\in(0,1)$, both wavelet and TV are used as regularizers.
When we have two nonsmooth terms,
the alternating direction method of multipliers (ADMM) \cite{boyd2011distributed}
is one of the appealing approaches.
However, ADMM only provides linear convergence rate $\mathcal O(1/k)$ \cite{he20121}
and the computation in each iteration is high because we need to solve a least square problem.
In \cite{ramani2010parallel,weller2013augmented,koolstra2019accelerating},
the authors proposed several preconditioning methods to solve the least square problem quickly,
which reduces the computation time of the whole reconstruction significantly. 

Similar to the quasi-Newton methods for smooth minimization problems \cite{jorge2006numerical},
the authors in \cite{lee2014proximal,becker2019quasi} developed quasi-Newton proximal methods (QNPMs)
{\cb that use second-order information}
for solving composite problems
when $\vx\in\mathbb R^N$.
Compared with APMs, QNPMs need fewer iterations to converge
which is appealing for problems when computing the gradient $\nabla f(\vx)$ is expensive.
Indeed, the authors in \cite{hong2020solving,kadu2020high,ge2020proximal}
applied QNPMs to solving the RED model and the TV based inverse-scattering and X-ray reconstruction
and observed faster convergence than APMs.
However, QNPMs require computing a weighted proximal mapping (WPM),
defined in \eqref{eq:def:WeightedProximal},
that needs more computation than computing proximal mapping in APMs.
So often QNPMs are impractical for many real applications.
To compute the WPM, Kadu et al. \cite{kadu2020high} applied primal-dual methods.
Alternatively, Ge et al. \cite{ge2020proximal} treated the WPM as a TV based image deblurring problem and computed the WPM with APMs \cite{beck2009fastTV}.
Those methods require inner and outer (i.e., two layers) iterations to compute the WPM,
making them inefficient.
Similar to QNPMs, the variable metric operator splitting methods (VMOSMs) \cite{chouzenoux2014variable}
introduce new metrics to accelerate the proximal methods.
For a discussion of the differences between QNPMs and VMOSMs,
see the prior work section in \cite{becker2019quasi}.

{ The primary \emph{contribution} of this paper lies in two significant advancements. Firstly, we expand QNPMs to address \eqref{eq:mainMin} for \emph{complex} $\vx$ {\Mcb  (Recall that reconstructed MRI images are inherently complex
\cite{fessler:10:mbi}
and in some applications
the image phase itself is useful,
e.g., high-field MRI \cite{duyn2007high}
and quantitative susceptibility mapping \cite{wang2015quantitative}).
} This is achieved by introducing a symmetric rank-$1$ method in the complex plane to approximate the Hessian matrix of  $f(\vx)$, which we called complex quasi-Newton proximal methods (CQNPMs). Secondly, we propose efficient approaches to compute the WPM. Notably, the computational needs of CQNPMs align closely with the proximal mapping in APMs for wavelet and/or TV-based reconstructions.}
Our numerical experiments on wavelet and TV based CS MRI reconstruction
show that CQNPMs converge faster than APMs in terms of iterations and CPU time,
demonstrating the potential advantage of CQNPMs for practical applications.

The rest of this paper is organized as follows.
\Cref{sec:prelminaries} first defines some notation and then reviews
the formulation of the discretized TV function and the definition of WPM.
\Cref{Sec:CQNPM}
derives our algorithm.
\Cref{sec:numericalExp}
reports numerical experiments on the wavelet and TV based CS MRI reconstruction.
\Cref{Sec:conclusion}
presents some conclusions and future work.

\section{Preliminaries}
\label{sec:prelminaries}
This section first defines some notation that simplifies the following discussion
and then describes the discretized TV functions.
Finally, we define the WPM that generalizes the well-known proximal mapping.

\subsection{Notation}

\begin{enumerate}[\texorpdfstring{$\bullet$}{*}]
\item
Denote by $\mX\in\mathbb C^{I\times J}$ the matrix form of $\vx\in\mathbb C^{N}$ with relation $\vx=\mrm{vec}(\mX)$  and $\mX=\mrm{mat}(\vx)$ where $\mrm{vec}(\cdot)$ denotes a column-stacking operator and $\mrm{mat}(\cdot)$ is an operator to reshape a vector to its matrix form.

\item
The $(i,j)$th (respectively, $n$th) element of a matrix $\mX\in\mathbb C^{I\times J}$
(respectively, vector $\vx\in \mathbb C^N$) is represented as $\mX_{i,j}$ (respectively, $\vx_n$).

\item
$\SetP_1$ denotes the set of matrix-pairs $(\mP,\mQ)$
where $\mP\in \mbb C^{(I-1)\times J}$ and $\mQ\in \mbb C^{I\times (J-1)}$ satisfy
\begin{equation*}
	\begin{array}{rl}
		|\mP_{i,j}|^2+|\mQ_{i,j}|^2\leq 1,&i=1,\cdots,I-1,~\,j=1,\cdots,J-1,\\[6pt]
		|\mP_{i,J}|\leq 1,&i=1,\cdots,I-1,\\[6pt]
		|\mQ_{I,j}|\leq 1,&J=1,\cdots,J-1.
	\end{array}
\end{equation*} 
 
\item
$\SetP_2$ is the set of matrix-pairs $(\mP,\mQ)$
where $\mP\in \mbb C^{(I-1)\times J}$ and $\mQ\in \mbb C^{I\times (J-1)}$
satisfy
$|\mP_{i,j}|\leq 1,\ |\mQ_{i,j}|\leq1, \forall i,j$.

\item
$\SetZ$ is the set of vectors $\vz\in\mbb{C}^N$
such that $|\vz_n|\leq 1,~\forall n$. 

\item
$\SetL:\, \mbb C^{(I-1)\times J}\times \mbb C^{I\times (J-1)}\rightarrow\mbb C^{I\times J}$
denotes a linear operator that satisfies
\begin{equation*}
	{\SetL}(\mP,\mQ)_{i,j} = \mP_{i,j}+\mQ_{i,j}-\mP_{i-1,j}-\mQ_{i,j-1},\forall i,j,
\end{equation*} 
where we assume that $\mP_{0,j}=\mP_{I,j}=\mQ_{i,0}=\mQ_{i,J}=0,\forall i,j$.

\item
The adjoint operator of
$\SetL:\,\mbb C^{I\times J}\rightarrow \mbb C^{(I-1)\times J}\times \mbb C^{I\times (J-1)}$
is 
$$\mathcal L^\SetT(\mX)=(\mP,\mQ),$$
where $\mP\in\mbb C^{(I-1)\times J}$ and $\mQ\in \mbb C^{I\times (J-1)}$
are the matrix pairs that satisfy
\[
	\begin{array}{ll}
		\mP_{i,j}=\mX_{i,j}-\mX_{i+1,j},&i=1,\cdots,I-1,\,j=1,\cdots,J,\\ [6pt]
		\mQ_{i,j}=\mX_{i,j}-\mX_{i,j+1},&i=1,\cdots,I,\,j=1,\cdots,J-1.
	\end{array}	
\]
\end{enumerate}

\subsection{Discretized Total Variation}
\label{sec:pre:definationTV}
 
Assuming zero Neumann boundary conditions for an image $\mX\in\mbb C^{I\times J}$, i.e.,
$$
\mX_{I+1,j}-\mX_{I,j}=0,~~\forall j~\mrm{and}~\mX_{i,J+1}-\mX_{i,J}=0,~~\forall i,
$$
the isotropic and anisotropic TV functions are defined as follows 
\begin{equation}
\begin{array}{rl}
 	 \mrm{TV}_{\mrm{iso}}(\mX)=&\mathlarger{\sum}\limits_{i=1}^{I-1}\mathlarger{\sum}\limits_{j=1}^{J-1}\sqrt{\left(\mX_{i,j}-\mX_{i+1,j}\right)^2+\left(\mX_{i,j}-\mX_{i,j+1}\right)^2}\\
 	 &+\mathlarger{\sum}\limits_{i=1}^{I-1} \left|\mX_{i,J}-\mX_{i+1,J}\right|+\mathlarger{\sum}\limits_{j=1}^{J-1}\left|\mX_{I,j}-\mX_{I,j+1}\right|,
\end{array}
\label{eq:isotropic TV}
\end{equation}
 and
\begin{equation}
  \begin{array}{rl}
 	 \mrm{TV}_{\ell_1}(\mX)=&\mathlarger{\sum}\limits_{i=1}^{I-1}\mathlarger{\sum}\limits_{j=1}^{J-1}\Big\{\left|\mX_{i,j}-\mX_{i+1,j}\right|+\left|\mX_{i,j}-\mX_{i,j+1}\right|\Big\}\\
 	 &+\mathlarger{\sum}\limits_{i=1}^{I-1} \left|\mX_{i,J}-\mX_{i+1,J}\right|+\mathlarger{\sum}\limits_{j=1}^{J-1}\left|\mX_{I,j}-\mX_{I,j+1}\right|,
 \end{array}
\label{eq:anisotropic TV}
\end{equation}
respectively.
Hereafter, we use $\mrm{TV}(\vx)$ to represent either
$\mrm{TV}_{\mrm{iso}}(\mX)$ or $\mrm{TV}_{\ell_1}(\mX)$.

\subsection{Weighted Proximal Mapping}
\label{sec:subsectionWPMProperties}


Given a proper closed convex function $h(\vx)$
and a Hermitian positive definite matrix
$\mW \succ0:\in\mbb{C}^{N\times N}$, the WPM associated to $h$ is defined as  
\begin{equation}
\mrm{prox}_{h}^{\mW}(\vx)=\arg\min\limits_{\vu}\left(h(\vu)+\frac{1}{2}\|\vu-\vx\|^2_{\mW}\right),
	\label{eq:def:WeightedProximal}
\end{equation} 
where $\|\cdot\|_{\mW}$ denotes the $\mW$-norm
defined by $\|\vq\|_{\mW}=\sqrt{\vq^\mathcal H\mW\vq}$.
Here $\mathcal H$ denotes Hermitian transpose. 
Clearly, \eqref{eq:def:WeightedProximal} simplifies to the proximal mapping for $\mW=\mI_{N}$
where $\mI_{N}$ represents the identity matrix.
Since  $h(\vu)+\frac{1}{2}\|\vu-\vx\|^2_{\mW}$ is strongly convex,
$\mrm{prox}_{h}^{\mW}(\vx)$ exists and is unique for $\vx\in\mrm{dom}\,h$
so that the WPM is well defined. 

\section{Complex Quasi-Newton Proximal Methods}
\label{Sec:CQNPM}

{\cb This section first describes a complex quasi-Newton proximal method (CQNPM)
for solving \eqref{eq:mainMin}
with regularizer
$h(\vx)=\alpha\|\mT\vx\|_1+(1-\alpha)\mrm{TV}(\vx)$ and $\alpha\in[0,1]$.
Here, we consider $\mT\in\mathbb C^{\tilde N\times N}$
to be a wavelet transform.
Then, we propose efficient methods to compute the related WPM.
Moreover, to avoid applying wavelet transforms when computing the WPM for $\alpha\in[0,1)$,
we propose a partial smooth approach.
Our numerical experiments show that such a partial smooth strategy
recovers the desired images with less computation. 

At $k$th iteration, CQNPM solves \eqref{eq:CQNPMSub} for $\vx_{k+1}$,
\begin{align}
\vx_{k+1}&=\arg\min_{\vx\in\mathbb C^N} f(\vx_k)+\left<\nabla f(\vx_k),\vx-\vx_k\right>
+\frac{1}{2a_k}\|\vx-\vx_k\|_{\mB_k}^2
\nonumber\\
&~~~~~~~~~~~~~
\mbox{} + \lambda \, h(\vx)
\nonumber\\
&=\mathrm{prox}^{\mB_k}_{a_k\lambda h}(\vx_k-a_k\mB_k^{-1}\nabla_{\vx}f(\vx_k))
, \label{eq:CQNPMSub}
\end{align}
where $a_k$ is the step-size
and $\mB_k\in\mathbb C^{N\times N}$ is a Hermitian symmetric positive definite matrix.
For clarity, we present the detailed steps of CQNPM in \Cref{alg:WPMs}.
Note that \Cref{alg:WPMs} would be identical to the proximal methods \cite{parikh2014proximal}
if one chose $\mB_k=\mI_N$.
In \cite{chouzenoux2014variable,bonettini2016variable},
the authors suggested using a diagonal matrix $\mB_k$ for their application.
However, building such a diagonal matrix is nontrivial and its effectiveness is problem dependent.
In this paper, we choose $\mB_k$ to be a more accurate approximation of the Hessian of $f(\vx)$.
Specifically,
we select $\mB_k$
based on the Symmetric Rank-$1$ (SR$1$) method \cite{jorge2006numerical},
a popular method used in quasi-Newton methods for approximatnig a Hessian matrix.
Following the derivation of SR$1$ for real variables,
we derive a complex plane SR$1$ that is similar to the real one.
\Cref{alg:SR1} presents the implementation details of SR$1$ in the complex plane. 
{  We found that using $\gamma>1$ is crucial
to ensure that $\mB_k$ is Hermitian positive definite in our setting
because otherwise $\left<\vm_k-\mH_0\vs_k,\vs_k\right>$ can become negative,
causing $\mB_k$ to turn indefinite.}
In our numerical experiments, we found that a fixed $\gamma>1$ worked well.   
}

\begin{algorithm}[!t]        
\caption{Proposed complex quasi-Newton proximal method.}    
\label{alg:WPMs} 
\begin{algorithmic}[1]
\REQUIRE $\vx_1$.
\ENSURE 
\FOR {$k=1,2,\dots$}
\STATE pick the step-size $a_k$ and the weighting $\mB_k$.
\STATE $\vx_{k+1}\leftarrow \mrm{prox}_{a_k\lambda h}^{\mB_k}\left(\vx_{k}-a_k\mB_k^{-1}\nabla_{\vx} f(\vx_{k})\right)$. \label{alg:WPMs:EvWPM}
\ENDFOR
\end{algorithmic}
\end{algorithm}

\begin{algorithm}[!t]        
\caption{SR1 updating.}         
\label{alg:SR1} 
\begin{algorithmic}[1]
\REQUIRE $\gamma>1 $, $\delta = 10^{-8}$, $\Xi>0$ a fixed real scalar, $\vx_k$, $\vx_{k-1}$, $\nabla f(\vx_k)$, and $\nabla f(\vx_{k-1})$.
\IF {$k=1$}
\STATE $\mB_k\leftarrow \Xi \mI$.
\ELSE
\STATE Set $\vs_k \leftarrow \vx_k-\vx_{k-1}$ and $\vm_k \leftarrow \nabla f(\vx_k)-\nabla f(\vx_{k-1})$.
\STATE Compute $\tau_k \leftarrow \gamma\frac{\|\vm_k\|_2^2}{\left<\vs_k,\vm_k\right>}$. ~~~~~ \% $\left<\va,\vb\right>=\vb^\mathcal H\va$
\IF {$\tau<0$}
\STATE $\mB_k\leftarrow\Xi\mI$.
\ELSE
\STATE $\mH_0 \leftarrow \tau_k\mI$.
\STATE $\vu_k\leftarrow \vm_k-\mH_0\vs_k$.
\IF {$|\left<\vu_k,\vs_k\right>|\leq \delta \|\vs_k\|_2\|\vu_k\|_2$}
\STATE $\vu_k\leftarrow \bm 0$.
\ENDIF
\STATE $\mB_k \leftarrow \mH_0+\frac{\vu_k\vu_k^\mathcal H}{\left<\vm_k-\mH_0\vs_k,\vs_k\right>}$.\label{alg:SR1:HessUpdat}
\ENDIF
\ENDIF
\STATE {\bf Return:} $\mB_k$
\end{algorithmic}
\end{algorithm}

\subsection{Compute Weighted Proximal Mapping}
The dominant computation in \Cref{alg:WPMs}
is computing the WPM at Step \ref{alg:WPMs:EvWPM}
which could be as hard as solving \eqref{eq:mainMin} for a general $\mB_k$.
However, we find one can compute $\mrm{prox}^{\mB_k}_{a_k\lambda h}(\cdot)$
as easily as the case when $\mB_k=\mI_N$
by using the structure of $\mB_k$.

To compute the WPM  $\mrm{prox}_{\bar{\lambda} h}^{\mB_k}(\vv_k)$ at $k$th iteration,
we need to solve the following problem
\begin{equation}
\min_{\vx\in\mbb C^N}
\|\vx-\vv_k\|_{\mB_k}^2 + 2\bar{\lambda} \big[\alpha\|\mT\vx\|_1 + (1-\alpha)\mrm{TV}(\vx)\big]
,\label{eq:TVWavReco:WPM}
\end{equation}
where $\vv_k=\vx_k-a_k\mB_k^{-1}\nabla_{\vx} f(\vx_k)$ and $\bar{\lambda}=a_k\lambda$.
A difficulty of \eqref{eq:TVWavReco:WPM} is the nonsmoothness of $\|\cdot\|_1$ and $\mrm{TV}(\cdot)$.
To address this difficulty, we consider a dual approach for \eqref{eq:TVWavReco:WPM}
that is similar to Chambolle's approach
for TV-based image reconstruction \cite{chambolle2004algorithm}. {\cb
Our method only uses one inner iteration to compute the WPM,
and the related gradient is computed easily.}
Proposition~\ref{prop:CQNP:TVWav:Dual}
describes the dual problem of \eqref{eq:TVWavReco:WPM}
and the relation between the primal and dual optimal solutions.

\begin{prop}
\label{prop:CQNP:TVWav:Dual}
Let 
\begin{equation}
\label{eq:TVWavReco:WPM:DualFinal:Main}    
(\vz^*,\mP^*,\mQ^*)=\argmin\limits_{\substack{\vz\in\SetZ\\(\mP,\mQ)\in\SetP}}
\left\|\vw_k(\vz,\mP,\mQ)\right\|^2_{\mB_k}
\end{equation}
where
$\vw_k(\vz,\mP,\mQ)=\vv_k-\bar{\lambda}{\mB}_k^{-1}
\left(\alpha\mT^\mathcal H\vz+(1-\alpha)\mrm{vec}\left(\mathcal L(\mP,\mQ)\right)\right)
$
and
$\SetP = \SetP_1~\text{or}~\SetP_2$ depending on which TV is used.
Then the optimal solution of \eqref{eq:TVWavReco:WPM} is given by
$\vx_{k+1}=\vw_k(\vz^*,\mP^*,\mQ^*).$
\end{prop}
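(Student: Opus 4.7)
The plan is to derive the dual of the weighted proximal mapping \eqref{eq:TVWavReco:WPM} by introducing variational representations of the two nonsmooth terms, swapping the inner minimum over $\vx$ with an outer maximum over the dual variables, and then carrying out the inner minimization in closed form using the Hermitian positive definite weighting $\mB_k$. The primal recovery formula $\vx_{k+1}=\vw_k(\vz^*,\mP^*,\mQ^*)$ then follows from the stationarity condition of the inner problem.

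First I would rewrite the nonsmooth terms dually. For the complex $\ell_1$ norm, the identity $|w|=\max_{|z|\le 1}\mathrm{Re}(\bar z w)$ for $w\in\mbb C$ yields $\|\mT\vx\|_1=\max_{\vz\in\SetZ}\mathrm{Re}(\vz^\mathcal H\mT\vx)$. For the TV term, the sets $\SetP_1$ and $\SetP_2$ together with the linear operator $\mathcal L$ provide, by construction, the dual representation $\mrm{TV}(\vx)=\max_{(\mP,\mQ)\in\SetP}\mathrm{Re}\,\langle\mathcal L(\mP,\mQ),\mX\rangle$, with $\SetP=\SetP_1$ for the isotropic TV and $\SetP=\SetP_2$ for the anisotropic one; this is Chambolle's dual formulation \cite{chambolle2004algorithm} extended to complex-valued images, and the definition of $\mathcal L$ and its adjoint in Section~\ref{sec:prelminaries} ensures that the resulting cross term is exactly the sum over finite differences appearing in \eqref{eq:isotropic TV}--\eqref{eq:anisotropic TV}.

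Next I would substitute these representations into \eqref{eq:TVWavReco:WPM} and swap $\min_{\vx}$ with the outer $\max$ over the dual variables. This exchange is justified by Sion's minimax theorem: the constraint sets $\SetZ$, $\SetP_1$, $\SetP_2$ are convex and compact; the objective is concave (in fact linear) in $(\vz,\mP,\mQ)$ and strongly convex and coercive in $\vx$. After the swap, the inner problem in $\vx$ becomes the unconstrained quadratic
\[
\min_{\vx}\ \|\vx-\vv_k\|_{\mB_k}^2 + 2\bar{\lambda}\,\mathrm{Re}\big\langle\alpha\mT^\mathcal H\vz+(1-\alpha)\mrm{vec}(\mathcal L(\mP,\mQ)),\,\vx\big\rangle,
\]
whose first-order (Wirtinger) condition reads $\mB_k(\vx-\vv_k)+\bar{\lambda}\big[\alpha\mT^\mathcal H\vz+(1-\alpha)\mrm{vec}(\mathcal L(\mP,\mQ))\big]=0$. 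Since $\mB_k\succ 0$ is invertible, the unique minimizer is exactly $\vx=\vw_k(\vz,\mP,\mQ)$.

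Finally, substituting this minimizer back and completing the square in the $\mB_k$-norm collapses the outer maximization to $\max\{\|\vv_k\|_{\mB_k}^2-\|\vw_k(\vz,\mP,\mQ)\|_{\mB_k}^2\}$, which after dropping the constant $\|\vv_k\|_{\mB_k}^2$ is equivalent to \eqref{eq:TVWavReco:WPM:DualFinal:Main}. Strong duality holds because the primal is a finite-valued strongly convex program and the dual feasible set is compact, so no duality gap arises and the primal optimum is recovered from any dual optimum via $\vx_{k+1}=\vw_k(\vz^*,\mP^*,\mQ^*)$. The main obstacle will be verifying the minimax swap rigorously in the complex setting and keeping careful track of the real-part operator on complex inner products—because $\|\cdot\|_1$ and $\mrm{TV}(\cdot)$ are real-valued, every cross term must be taken as $\mathrm{Re}\langle\cdot,\cdot\rangle$—after which the remaining manipulations are straightforward algebra on the $\mB_k$-quadratic.
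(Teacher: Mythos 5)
Your proposal is correct and takes essentially the same route as the paper's proof: dualize the $\ell_1$ and TV terms via the complex maximum representations $|x|=\max_{|p|\le 1}\Re(p^*x)$ and its two-component analogue, exchange the min and max, solve the inner $\mB_k$-weighted quadratic in closed form to get $\vx=\vw_k(\vz,\mP,\mQ)$, and recover the primal solution from the dual optimum. The only cosmetic differences are that you justify the exchange explicitly via Sion's minimax theorem (the paper simply asserts it from convexity--concavity) and you swap before completing the square, whereas the paper completes the square first and then swaps.
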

\begin{proof}
{\cb See \Cref{App:proof:prop:CQNP:TVWav:Dual}.}
\end{proof}

Using Proposition \ref{prop:CQNP:TVWav:Dual},
we can apply the FISTA \cite{nesterov1983method,beck2009fast}
to solve \eqref{eq:TVWavReco:WPM:DualFinal:Main}
for computing $\mrm{prox}_{\bar\lambda h}^{\mB_k}$
since \eqref{eq:TVWavReco:WPM:DualFinal:Main} is convex and continuously differentiable.
\Cref{lemma:gradient:LipschitzConstant:dualPro}
specifies
the corresponding gradient and Lipschitz constant of \eqref{eq:TVWavReco:WPM:DualFinal:Main}.
\begin{lemma}
	\label{lemma:gradient:LipschitzConstant:dualPro}
The gradient of \eqref{eq:TVWavReco:WPM:DualFinal} is
\begin{equation}
	\label{eq:TVWavReco:WPM:DualFinal:gradient}
-2\bar{\lambda} \bmat
 \alpha\mT\\
	(1-\alpha)\mathcal L^\mathcal{T}
 \emat \vw_k(\vz,\mP,\mQ)
\end{equation}
and the corresponding Lipschitz constant is
$$L_c=2\sigma_{\mrm{min}}
\bar{\lambda}^2(\alpha^2\|\mT\|^2+8(1-\alpha)^2)$$
where $\sigma_{\mrm{min}}$ is the smallest eigenvalue of ${\mB}_k$.
\end{lemma}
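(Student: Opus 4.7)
The plan is to exploit the fact that \eqref{eq:TVWavReco:WPM:DualFinal:Main} is a Hermitian quadratic form in the dual variables, so both the gradient and the Hessian can be read off from a single chain-rule computation in Wirtinger calculus. First I would stack $\vz$ and $\mathrm{vec}(\mP), \mathrm{vec}(\mQ)$ into a single complex variable $\vartheta$ and rewrite $\vw_k(\vartheta) = \vv_k - \bar{\lambda}\mB_k^{-1}\mathcal{C}\vartheta$, where $\mathcal{C}$ is the linear operator sending $(\vz,\mP,\mQ)$ to $\alpha\mT^\mathcal{H}\vz + (1-\alpha)\mathrm{vec}(\mathcal{L}(\mP,\mQ))$, so that its adjoint is the stacked operator $\mathcal{C}^\mathcal{H} = [\alpha\mT;\,(1-\alpha)\mathcal{L}^\mathcal{T}]$ acting on the ambient image space.

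Using the standard Wirtinger identity $\nabla_\vartheta(\vw^\mathcal{H}\mB_k\vw) = 2(\partial\vw/\partial\vartheta)^\mathcal{H}\mB_k\vw$ with $\partial\vw_k/\partial\vartheta = -\bar{\lambda}\mB_k^{-1}\mathcal{C}$, the product $\mB_k^{-1}\mB_k$ collapses to the identity and I would obtain
\[
\nabla_\vartheta\|\vw_k\|_{\mB_k}^2 = -2\bar{\lambda}\,\mathcal{C}^\mathcal{H}\vw_k(\vz,\mP,\mQ),
\]
which is exactly \eqref{eq:TVWavReco:WPM:DualFinal:gradient} once the two blocks of $\mathcal{C}^\mathcal{H}$ are written out.

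For the Lipschitz constant, since the objective $G(\vartheta)=\|\vw_k(\vartheta)\|_{\mB_k}^2$ is quadratic, its gradient is affine and Lipschitz with constant equal to the operator norm of the constant Hessian $2\bar{\lambda}^2\mathcal{C}^\mathcal{H}\mB_k^{-1}\mathcal{C}$. I would then apply the submultiplicative bound
\[
\|\mathcal{C}^\mathcal{H}\mB_k^{-1}\mathcal{C}\| \le \|\mB_k^{-1}\|\cdot\|\mathcal{C}\|^2 \le \|\mB_k^{-1}\|\bigl(\alpha^2\|\mT\|^2+(1-\alpha)^2\|\mathcal{L}\|^2\bigr),
\]
where the second inequality uses the block-diagonal estimate $\|\mathcal{C}\|^2 \le \alpha^2\|\mT^\mathcal{H}\|^2 + (1-\alpha)^2\|\mathcal{L}\|^2$ for the stacked operator on orthogonal component spaces. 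The last ingredient is the Chambolle estimate $\|\mathcal{L}\|^2 \le 8$, which I would derive by expanding $\|\mathcal{L}(\mP,\mQ)\|_F^2$ entry-by-entry using the four-term definition of $\mathcal{L}$ in Section II and applying $|a+b+c+d|^2 \le 4(|a|^2+|b|^2+|c|^2+|d|^2)$, followed by a double-counting of adjacent entries that contributes an extra factor of $2$.

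The main obstacle is bookkeeping rather than analysis. Because $\mP$ and $\mQ$ live in two different matrix spaces, I have to fix a product Frobenius inner product on $\SetP$ and verify that $\mathcal{L}$ and $\mathcal{L}^\mathcal{T}$ from Section II are genuine adjoints with respect to it; only then does the complex chain rule produce exactly the block gradient quoted in the lemma, rather than a gradient in which the two blocks are weighted inconsistently. Once that identification is made and the Chambolle estimate is in hand, both formulas follow immediately from the quadratic structure of the dual objective.
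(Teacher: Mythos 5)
Your proposal is correct in its mathematics and follows essentially the same route as the paper's proof: the gradient comes from the chain rule applied to the quadratic $\|\vw_k(\vz,\mP,\mQ)\|^2_{\mB_k}$ (the paper states this directly; your Wirtinger-calculus phrasing is just the careful complex-variable version of the same step), and the Lipschitz constant comes from bounding the constant Hessian $2\bar{\lambda}^2\,\mathcal{C}^\mathcal{H}\mB_k^{-1}\mathcal{C}$ by submultiplicativity together with $\|\mathcal{L}\|^2\le 8$, which the paper cites from Beck and Teboulle rather than rederiving as you sketch. The one substantive difference is the final constant: your (correct) bookkeeping leaves the factor $\|\mB_k^{-1}\|$, which equals $1/\sigma_{\mrm{min}}$ for a Hermitian positive definite $\mB_k$, whereas the lemma statement and the paper's last inequality place $\sigma_{\mrm{min}}$ itself in the numerator; that is, the paper silently substitutes $\sigma_{\mrm{min}}$ for $\|\mB_k^{-1}\|$. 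So as written your argument establishes the lemma with $\sigma_{\mrm{min}}$ replaced by $1/\sigma_{\mrm{min}}$, and reproducing the stated formula verbatim would require the identity $\|\mB_k^{-1}\|=\sigma_{\mrm{min}}$, which is false in general (they agree only when $\sigma_{\mrm{min}}=1$). Your version is the internally consistent one — this discrepancy appears to be an error in the paper's statement and proof rather than a gap in your approach.
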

\begin{proof}
{\cb See \Cref{App:proof:lemma:gradient:LipschitzConstant:dualPro}.}
\end{proof}

According to the formulation of $\mB_k$ proposed in \Cref{alg:SR1},
we can obtain $\sigma_{min}$ easily through%
\footnote{We note that $\left<\vm_k-\mH_0\vs_k,\vs_k\right>$
is real in our setting, see Observation I.}
$$
\sigma_{min}=\left\{\begin{array}{ll}
    \Xi& \mrm{if} ~~\tau<0,  \\
    \tau&  \mrm{if} ~~\left<\vm_k-\mH_0\vs_k,\vs_k\right>>0,\\
    \tau+\frac{\vu^\mathcal H\vu}{\left<\vm_k-\mH_0\vs_k,\vs_k\right>}&\mrm{if} ~~\left<\vm_k-\mH_0\vs_k,\vs_k\right><0.
\end{array}\right.
$$
The value of $\|\mT\|$ depends on the choice of wavelets which can be computed in advance,
so the computational cost of obtaining the Lipschitz constant of \eqref{eq:TVWavReco:WPM:DualFinal} is cheap.
For completeness,
\Cref{alg:FPMDualProx} presents the implementation details of FISTA
for solving \eqref{eq:TVWavReco:WPM:DualFinal}.
{\cb We terminate \Cref{alg:FPMDualProx} when it reaches a maximal number of iterations
or a given accuracy tolerance.
The initial value $(\vz_1,\mP_1,\mQ_1)$ in \Cref{alg:FPMDualProx}
uses the final solution of the previous iteration.}


\begin{algorithm}[!htb]        
\caption{FISTA for solving \eqref{eq:TVWavReco:WPM:DualFinal}.}    
\label{alg:FPMDualProx} 
\begin{algorithmic}[1]
\REQUIRE $\mB_k$, $\vv_k$, $\bar{\lambda}>0$, $\alpha\in[0,1]$, Lipschitz constant $L_c$, maximal iteration $\mrm{Max}\_\mrm{Iter}$, tolerance $\epsilon>0$, and initial values $\vz_1,\mP_1,
\mQ_1$.
\ENSURE 
\STATE $t_1 \leftarrow 1$.
\STATE $(\bar{\vz}_1,\bar{\mP}_1,\bar{\mQ}_1)\leftarrow({\vz}_1,{\mP}_1,{\mQ}_1)$.
\FOR {$s=1,2,\dots,\mrm{Max}\_\mrm{Iter}$}
\STATE Compute $\bar{\vw}\leftarrow\vw_k(\bar{\vz}_s,\bar{\mP}_s,
\bar{\mQ}_s).$
\IF {$\alpha\neq 0$}
\STATE {$\vz_{s+1}\leftarrow \mrm{Proj}_{\mathcal Z}\big(\bar{\vz}_s+\frac{2\bar\lambda\alpha}{L_c}\mT\bar{\vw}\big).$}
\ELSE
\STATE Set $\vz_{s+1}$ empty.
\ENDIF
\IF {$\alpha\neq 1$}
\STATE $(\mP_{s+1},\mQ_{s+1})\leftarrow \mrm{Proj}_{\mathcal P}\big((\bar{\mP}_s,\bar{\mQ}_s)+\frac{2\bar\lambda(1-\alpha)}{L_c}\mathcal L^\mathcal T\bar{\vw}\big)$.
\ELSE
\STATE Set $\mP_{s+1}$ and $\mQ_{s+1}$ empty.
\ENDIF
\IF{$\|(\vz_{s+1}-\vz_s,\mP_{s+1}-\mP_s,\mQ_{s+1}-\mQ_s)\|\leq\epsilon$}
\STATE break.
\ENDIF
\STATE $t_{s+1}\leftarrow \frac{1+\sqrt{1+4t_s^2}}{2}$.
\STATE $(\bar{\vz}_{s+1},\bar{\mP}_{s+1},\bar{\mQ}_{s+1})\leftarrow\begin{array}{l} \frac{t_{s+1}+t_s-1}{t_{s+1}}({\vz}_{s+1},{\mP}_{s+1},{\mQ}_{s+1})\\ [6pt]
~~~-\frac{t_s-1}{t_{s+1}}({\vz}_s,{\mP}_s,{\mQ}_s).\end{array}$
\STATE $t_s\leftarrow t_{s+1}$.
\ENDFOR
\end{algorithmic}
\end{algorithm}

\begin{remark}
    \label{remark:FPMDualProx}
    Compared with APMs for addressing \eqref{eq:mainMin},
    the additional cost of CQNPM is applying $\mB_k^{-1}$
    in computing $\vv_k$ and $\vw_k$ in \Cref{alg:WPMs,alg:FPMDualProx}.
    This inversion can be computed cheaply through the Woodbury matrix identity.
    Moreover, computing the projectors $\mrm{Proj}_{\mathcal Z}(\cdot)$
    and $\mrm{Proj}_{\mathcal P}(\cdot)$ is also cheap
    and identical to the one shown in \cite{beck2009fastTV}, so we omit the details here.
    The step-size $a_k$ in \Cref{alg:WPMs} can be simply set to be $1$. 
\end{remark}

\subsection{Compute the Weighted Proximal Mapping when \texorpdfstring{$\alpha=1$}{alpha}}
For $\alpha=1$, running \Cref{alg:FPMDualProx} to compute the WPM would be inefficient
since we would have to apply wavelet transform many times at each outer iteration.
However, if $\mT$ {\red is left invertible that $\mT^\mathcal H\mT=\mI_N$ }, we can solve the following problem instead of \eqref{eq:mainMinWavTV}
to avoid using \Cref{alg:FPMDualProx} to compute the WPM:
\begin{equation}
\bar{\vx}^*=\argmin_{\bar{\vx}\in\mathbb C^{\tilde N}}\underbrace{\frac{1}{2}\|\mA\mT^\mathcal H\bar{\vx}-\vy\|_2^2}_{f(\bar{\vx})}+\lambda \|\bar{\vx}\|_1
.\label{eq:WaveletReco}
\end{equation}
Then the recovered image is $\vx^*=\mT^\mathcal H\bar{\vx}^*$.
Now the corresponding WPM becomes
\begin{equation}
\mrm{prox}_{\bar{\lambda}\|\cdot\|_1}^{\mB_k}(\vv_k)=\argmin_{\bar{\vx}\in\mathbb C^{\tilde N}}\|\bar{\vx}-\vv_k\|_{\mB_k}^2+2\bar{\lambda}\|\bar{\vx}\|_1.\label{eq:WavReco:WPM}
\end{equation}
{ Note that \eqref{eq:mainMinWavTV} and \eqref{eq:WaveletReco}
represent the analysis-based and synthetic-based priors, respectively.
For a detailed discussion of their relations and equivalence, see \cite{elad2007analysis}.}

Let $\mW\in\mathbb{C}^{\tilde N\times \tilde N}: = \mD\pm\vu\vu^\mathcal H$
where $\mD\in \mathbb R^{\tilde N\times \tilde N}$ is a diagonal matrix
and $\vu\in\mathbb{C}^{\tilde N}$.
Becker et al. proposed the following theorem that relates
$\mrm{prox}_{\lambda h}^{\mW}(\vx)$ and $\mrm{prox}_{\lambda h}^{\mD}(\vx)$.
\begin{theorem}[Theorem 3.4, \cite{becker2019quasi}\footnote{The theorem is proved in real plane but it is also valid in complex plane.}]
\label{them:structuredWPM:evaluation}
	Let $\mW = \mD\pm\vu\vu^\mathcal H$. Then,
$$\mrm{prox}_{\lambda h}^{\mW}(\vx)=\mrm{prox}_{\lambda h}^{\mD}(\vx\mp\mD^{-1}\vu\beta^*),$$
where $\beta^*\in\mathbb C$ is the unique zero of the following nonlinear equation
$$
\mathbb J(\beta):\vu^\mathcal H\left(\vx-\mrm{prox}_{\lambda h}^{\mD}(\vx\mp\mD^{-1}\vu\beta)\right)+\beta.
$$
\end{theorem}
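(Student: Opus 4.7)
The plan is to derive the result from the first-order optimality conditions for the WPM subproblem, exploiting that the rank-$1$ correction $\vu\vu^\mathcal H$ couples to the variable only through the scalar $\vu^\mathcal H(\vp-\vx)$. I will treat the $(+)$ case $\mW=\mD+\vu\vu^\mathcal H$ in detail; the $(-)$ case will follow by an identical argument with sign bookkeeping (and under the implicit condition $\vu^\mathcal H\mD^{-1}\vu<1$ that guarantees $\mW\succ0$). By definition of the WPM in \eqref{eq:def:WeightedProximal},
$$
\mrm{prox}_{\lambda h}^{\mW}(\vx)=\arg\min_{\vp\in\mathbb C^{\tilde N}}\lambda h(\vp)+\tfrac12(\vp-\vx)^{\mathcal H}\mD(\vp-\vx)+\tfrac12\bigl|\vu^\mathcal H(\vp-\vx)\bigr|^2.
$$
Since $\mW\succ0$ and $h$ is proper closed convex, the objective is strictly convex and admits a unique minimizer $\vp^\star$, and Wirtinger calculus delivers the subdifferential inclusion
$$
0\in\partial(\lambda h)(\vp^\star)+\mD(\vp^\star-\vx)+\vu\vu^\mathcal H(\vp^\star-\vx).
$$

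Next I introduce the scalar $\beta^\star:=\vu^\mathcal H(\vp^\star-\vx)\in\mathbb C$. Substituting $\vu\vu^\mathcal H(\vp^\star-\vx)=\beta^\star\vu$ into the inclusion above gives
$$
0\in\partial(\lambda h)(\vp^\star)+\mD\bigl(\vp^\star-(\vx-\mD^{-1}\vu\beta^\star)\bigr),
$$
which is precisely the optimality condition characterizing $\mrm{prox}_{\lambda h}^{\mD}$ evaluated at the shifted point $\vx-\mD^{-1}\vu\beta^\star$. Hence $\vp^\star=\mrm{prox}_{\lambda h}^{\mD}(\vx-\mD^{-1}\vu\beta^\star)$. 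Plugging this identity back into the defining relation for $\beta^\star$ and rearranging yields
$$
\vu^\mathcal H\bigl(\vx-\mrm{prox}_{\lambda h}^{\mD}(\vx-\mD^{-1}\vu\beta^\star)\bigr)+\beta^\star=0,
$$
which is exactly $\mathbb J(\beta^\star)=0$ in the $(+)$ case. This establishes the displayed formula together with existence of a root of $\mathbb J$.

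It remains to argue uniqueness of $\beta^\star$. Given any root $\beta$ of $\mathbb J$, I will verify that the candidate $\tilde\vp:=\mrm{prox}_{\lambda h}^{\mD}(\vx-\mD^{-1}\vu\beta)$ satisfies the original subgradient inclusion for the WPM problem: the defining prox-inclusion for $\tilde\vp$ together with $\beta=\vu^\mathcal H(\tilde\vp-\vx)$ (which is the rearrangement of $\mathbb J(\beta)=0$) reassembles the rank-$1$ term $\vu\beta=\vu\vu^\mathcal H(\tilde\vp-\vx)$, yielding the full $\mW$-inclusion. Strict convexity of the subproblem then forces $\tilde\vp=\vp^\star$, and hence $\beta=\vu^\mathcal H(\vp^\star-\vx)=\beta^\star$. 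The main subtlety I anticipate is the Wirtinger/complex bookkeeping: unlike the real case in \cite{becker2019quasi}, $\beta$ is genuinely complex, so I must justify that treating $\vp$ and $\bar{\vp}$ as independent and zeroing the $\partial/\partial\bar{\vp}$ component yields a valid subgradient inclusion, and I must check that the two sign conventions in $\mathbb J(\beta)$ line up correctly with the $\pm$ in $\mW=\mD\pm\vu\vu^\mathcal H$ (including the admissibility constraint $\vu^\mathcal H\mD^{-1}\vu<1$ in the minus case).
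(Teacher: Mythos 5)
Your argument is correct, but it cannot be compared line-by-line with the paper's own proof because the paper gives none: \Cref{them:structuredWPM:evaluation} is quoted from \cite{becker2019quasi} (Theorem 3.4 there), and the passage from real to complex variables is asserted only in a footnote, without argument. Your derivation is therefore a genuine addition rather than a rederivation. The reduction itself---introduce $\beta^\star=\vu^\mathcal H(\vp^\star-\vx)$, absorb the rank-one term into a shift of the prox center so that $\vp^\star=\mrm{prox}_{\lambda h}^{\mD}(\vx\mp\mD^{-1}\vu\beta^\star)$, and read off $\mathbb J(\beta^\star)=0$---is in the same spirit as the real-variable argument of Becker et al.; what is genuinely different, and exactly what the complex case needs, is your uniqueness step. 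In the real setting one can argue that $\mathbb J:\mathbb R\to\mathbb R$ is strictly increasing and hence has one root, but monotonicity has no direct meaning for $\mathbb J:\mathbb C\to\mathbb C$; your alternative---any root of $\mathbb J$ yields a point satisfying the optimality inclusion of the strongly convex WPM problem, hence equal to the unique minimizer $\vp^\star$, hence the root equals $\vu^\mathcal H(\vp^\star-\vx)$---bypasses this entirely and handles both sign cases uniformly, which is precisely the content of the paper's unproved footnote. Two points to tighten when writing this up: (i) phrase the first-order condition over $\mathbb C^{\tilde N}\cong\mathbb R^{2\tilde N}$ with the real inner product $\Re\langle\cdot,\cdot\rangle$, under which the real-valued Hermitian quadratic $\tfrac12\|\vp-\vx\|_{\mW}^2$ is differentiable with gradient represented by $\mW(\vp-\vx)$; this makes the Wirtinger bookkeeping you flag unnecessary and legitimizes the subdifferential sum rule you invoke; (ii) state explicitly that $\mW\succ0$ is inherited from the definition of the WPM in \eqref{eq:def:WeightedProximal} (equivalently $\vu^\mathcal H\mD^{-1}\vu<1$ in the minus case), since both the existence of $\vp^\star$ and your strict-convexity argument rest on it.
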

\red{Using the notation in \Cref{alg:SR1},}
we have the following observation:
\begin{namedthm*}{Observation I}
$\tau$ and $\left<\vm_k-\mH_0\vs_k,\vs_k\right>$ in \Cref{alg:SR1} are real.
\end{namedthm*}
\begin{proof}
{\cb
Note that $f(\vx)=\frac{1}{2}\|\mA\vx-\vy\|_2^2$.
Then we have $\vm_k=\mA^\mathcal H\mA\vs_k$,
so $\left<\vs_k,\vm_k\right>$ is real.} 
\end{proof}
Since $\left<\vm_k-\mH_0\vs_k,\vs_k\right>$ is real, we  rewrite $\mB_k$ as
\begin{equation}
    \mB_k = \mH_0+\mrm{sgn}\left(\left<\vm_k-\mH_0\vs_k,\vs_k\right>\right)\tilde{\vu}_k\tilde{\vu}_k^\mathcal H, \label{eq:B_k:form}
\end{equation}
where
$\tilde{\vu}_k=\frac{\vu_k}{\sqrt{\left<\vm_k-\mH_0\vs_k,\vs_k\right>}}$
and $\mrm{sgn}(\cdot)$ denotes the sign function
such that $\mB_k$ holds the same structure as $\mW$ in \Cref{them:structuredWPM:evaluation}.
So, instead of solving \eqref{eq:WavReco:WPM} directly,
we first solve $\mathbb J(\beta)=0$
and then use \Cref{them:structuredWPM:evaluation}
to obtain $\mrm{prox}_{\bar\lambda\|\cdot\|_1}^{\mB_k}(\vv_k)$.
In this paper, we solve
$\mbb J(\beta) = 0$
using ``SciPy'' library in Python.

\subsection{Partial Smoothing}

For $\alpha\in(0,1)$, \Cref{alg:FPMDualProx} still requires
applying many wavelet transforms,
which can dominate the computational cost.
An alternative way is to use the idea proposed in \cite{beck2012smoothing}
where one partially smooths the objective and then applies \Cref{alg:WPMs}.
For comparison purposes, 
we apply \Cref{alg:WPMs} to the following problem 
\begin{equation}
  \min_{\vx\in\mathbb C^N} \underbrace{ \frac{1}{2}\|\mA\vx-\vy\|_2^2+\lambda\alpha\cdot\mrm{S}^{\eta}\big(\|\mT\vx\|_1\big)}_{f(\vx)}
  +\underbrace{\lambda(1-\alpha)\mrm{TV}(\vx)}_{h(\vx)},\label{eq:mainMin:PartialSmooth}
\end{equation}
such that each outer iteration needs only two wavelet transforms.
For the comparisons in this paper,
we used
$\mrm{S}^{\eta}(\|\vx\|_1)=\sum_{n=1}^N\sqrt{\vx_n^2+\eta}$
with $\eta>0$ so that $f(\vx)$ in \eqref{eq:mainMin:PartialSmooth} is differentiable.
Our numerical experiments compare the performance of such a partial smoothing approach
to methods based on the original cost function
for image reconstruction in CS MRI.%
\footnote{One could instead partially smooth the TV regularizer.
However, in our settings, we found that smoothing $\|\mT\vx\|_1$ led to better qualifty
than TV smoothing.}

\section{Numerical Experiments}
\label{sec:numericalExp}

This section studies the performance of our algorithm
for image reconstruction in CS MRI with {\cb non-Cartesian sampling trajectories.
Specifically, we consider the radial and spiral trajectories.
Moreover, we also study the robustness of our algorithm
to the choice of $\gamma$ and Max\_Iter in \Cref{alg:SR1,alg:FPMDualProx}, respectively.} 
Similar to \cite{lustig2007sparse}, we focus on wavelet and TV regularizers.
{\cb We first present our experimental and algorithmic settings
and then show our reconstruction results.}

\noindent {\bf {\emph{Experimental Settings}}:}
We took complex k-space data
from the brain and knee training datasets
(one each)
in the NYU fastMRI dataset \cite{zbontar2018fastmri}
to generate the simulated k-space data.
We applied the ESPIRiT algorithm \cite{uecker2014espirit} to recover the complex images
and then cropped the images to size $256\times 256$
to define the ground-truth images,
with maximum magnitude scaled to one.
\Cref{fig:GT_imag} shows the magnitude of the complex-valued ground-truth images.
Following \cite{iyer2022accelerating},
we used
$32$ interleaves, $1688$ readout points, and $12$-coils
(respectively, $96$ radial projections, $512$ readout points, and $12$ coils)
for the spiral (respectively, radial) trajectory to define the forward model $\mA$.
{ \Cref{fig:Trj_sampling} presents the used trajectories in this paper.
For clarity, we plot only every 4th sample of the trajectories.}
Applying the used forward model to the ground truth image generated the noiseless multi-coil k-space data. 
We added complex i.i.d Gaussian noise with mean zero and variance $10^{-2}$
to all coils to form the measurements, $\vy$.
{\Mcb The data input SNR was below $7$dB.
We also studied a higher data input SNR case of around $30$dB.} 
Our implementation used Python programming language with SigPy library \cite{ong2019sigpy}.
The reconstructions ran on a workstation with 2.3GHz AMD EPYC 7402.
Our \emph{code} is available on~\url{https://github.com/hongtao-argmin/CQNPCS_MRIReco}.
{\Mcb The supplementary material provides additional experimental results
and a comparison with a Plug-and-Play reconstruction method
using BM3D and a deep denoiser \cite{zhang2021plug}.}

\input{Figs/GTFig.tex}

\input{Figs/TrjFig.tex}

\noindent {\bf{\emph{Algorithmic Settings}}:}
For APM, we precomputed the Lipschitz constant for all experiments.
For CQNPM, we set $a_k=1$ and $\gamma=1.7$.
Denote by S-APM (respectively, S-CQNPM) when APM (respectively, CQNPM) is used to solve \eqref{eq:mainMin:PartialSmooth}.
We chose the step-size in S-APM
using a backtracking strategy \cite{beck2017first}. {\cb Moreover, we also compared our method with primal-dual (PD) methods \cite{sidky2012convex}.}
The tradeoff parameters $\lambda$ and $\alpha$ were chosen to reach the \emph{highest} peak signal-to-noise ratio (PSNR)
when running enough iterations of APM.
We set $\eta = 10^{-5}$ in our experiments. {\cb The maximal number of iterations and tolerance in} \Cref{alg:FPMDualProx} {\cb  are set to be $20$ and $10^{-6}$ for both CQNPM and APM.}

\subsection{Radial Acquisition MRI Reconstruction}
\Cref{fig:brain:radial:cost,fig:RadialBrain:wavelet} show the performance of \Cref{alg:WPMs}
for the wavelet based reconstruction of the brain image
and the comparison with APM \cite{beck2009fast} and PD \cite{sidky2012convex}. {\cb Here, we used  \Cref{them:structuredWPM:evaluation} to compute the WPM.}
Clearly, CQNPM converged faster than APM {\cb and PD} in terms of iterations.
Compared with the cost of computing the proximal mapping,
the additional cost of computing WPM with our method is insignificant.
{\cb \Cref{fig:brain:radial:cost,fig:RadialBrain:wavelet} show that
the computational costs of CQNPM and APM per iteration are similar}.
The comparison of PSNR versus CPU time in \Cref{fig:RadialBrain:wavelet}
also shows that CQNPM reached a higher PSNR with less CPU time,
illustrating the fast convergence of CQNPM.
The reconstructed images at $3$, $10$, $13$, and $16$th iteration
illustrate that CQNPM yielded a clearer image than APM for the same number of iterations.
{\cb Since PD led to a much lower PSNR than APM,
we do not present the reconstructed images of PD.}
Similar observations apply to the knee image
{\cb and the related results are provided in the supplemental material}.

\input{Figs/BrainWaveFigRadial.tex}

We also studied the performance of our algorithm when using both wavelet and TV regularizers. {\cb Here, we used \Cref{alg:FPMDualProx} to compute the the proximal mapping and WPM.}
Since ADMM is a classical method for \eqref{eq:mainMinWavTV}
with $h(\vx)=\alpha \|\mT\vx\|_1+(1-\alpha)\mrm{TV}(\vx)$,
we include a comparison with ADMM.
Moreover, we also studied the performance of the partial smoothing technique.
{\cb Although PD does not require any inner iteration,
unlike ADMM, APM, and our method,
our method is still faster than PD in terms of iterations and CPU time.}

\Cref{fig:brain:radial:WavTV:cost,fig:RadialBrain:WavTV}
present the results for the reconstruction of the brain image.
{\cb CQNPM reduced the cost faster than APM in terms of iterations and CPU time.}
Although we solved \eqref{eq:mainMin:PartialSmooth} instead of \eqref{eq:mainMinWavTV}
for the partial smoothing method, the cost is still computed with \eqref{eq:mainMinWavTV}. 
Surprisingly, {\cb in this setting,} we see that, for the cost values versus iterations,
S-APM (respectively, S-CQNPM) converged similar to APM (respectively, CQNPM) {\cb in terms of iterations}.
However, from the cost values versus CPU time plot,
{\cb S-CQNPM converged faster than CQNPM, as expected
since the partial smoothing method requires only two wavelet transforms per outer iteration.
However, S-APM converged slower than APM in terms of CPU time
because S-APM requires applying a line search to choose the step-size,
increasing the computational cost.}
{\MRcb{Although CQNPM/S-CQNPM require an iterative method to solve the WPM,
\Cref{sub:exp:sub:ChoiceMaxiter} demonstrated that the WPM can be solved inexactly,
and the computation for solving the WPM is relatively inexpensive
compared to executing $\mA\vx$ in CS MRI reconstruction.
Thus, CQNPM/S-CQNPM converged faster than ADMM/PD
both in terms of iteration numbers and in CPU time.
Note that ADMM requires solving a least-squares problem at each iteration,
which involves applying $\mA\vx$ multiple times,
leading to significantly slower convergence in terms of CPU time.}}

The PSNR versus CPU time plot {\cb in \Cref{fig:RadialBrain:WavTV}}
also demonstrates the fast convergence of CQNPM and S-CQNPM.
{\cb Compared with the previous experiments that only used a wavelet regularizer,
we see an improved PSNR here, confirming the benefit of using both wavelet and TV regularizers.} 
The reconstructed images at $3$, $10$, $13$, and $16$th iteration for each method%
\footnote{We do not show the reconstructed image of ADMM
since it yielded a much lower PSNR than other methods.}
illustrate that the partial smoothing method works as well as the nonsmoothing one.
In summary,
the proposed method converged faster than other methods in terms of iterations and CPU time,
and S-CQNPM is the best algorithm for \eqref{eq:mainMinWavTV} in this setting.
{\cb We also tested our algorithm on the knee image
and provided the results in the supplementary material.}

\input{Figs/BrainWaveTVFigRadial.tex}

\subsection{Spiral Acquisition MRI Reconstruction}
This part studies the reconstruction with spiral acquisition that used
$32$ interleaves, $1688$ readout points, and $12$ coils.
\Cref{fig:knee:Spiral:WavTV:cost,fig:SpiralKnee:WavTV}
show the results of the knee image with wavelet and TV regularizers. The trends are similar to the radial acquisition case.  {\cb Note that CQNPM reduced the cost values faster than S-CQNPM in terms of iterations and CPU time in this setting. However, S-CQNPM reached a higher PSNR than CQNPM with same CPU time.} {\cb We provided the reconstruction of the brain and knee images with wavelet regularizer and the brain image with wavelet and TV regularizers in the supplementary material.}

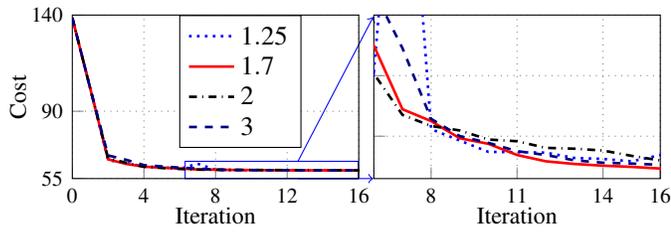
\begin{figure}[!t]
	\centering
\begin{tikzpicture}

 \draw [draw=blue] (1.5,0) rectangle (3.8,0.23);

 \draw [-to,color=blue](3,0.23) -- (4,2.15);
 \draw [-to,color=blue](3,0) -- (4,0);
 
 	\pgfplotsset{every axis legend/.append style={legend pos=south east,anchor=south east,font=\normalsize, legend cell align={left}},every tick label/.append style={font=\footnotesize}}
    \pgfplotsset{grid style={dotted, gray}}
 	\begin{groupplot}[enlargelimits=false,scale only axis, group style={group size=2 by 2,x descriptions at=edge bottom,group name=mygroup},
   	width=0.21*\textwidth,
 	height=0.12*\textwidth,
 	every axis/.append style={font=\small,title style={anchor=base,yshift=-1mm}, x label style={yshift = 0.5em}, y label style={yshift = -.5em}, grid = both}, legend style={at={(0.8,1),font =\tiny,text opacity = 1,fill opacity=0.6},anchor=north east}
 	]
  
\nextgroupplot[ylabel={Cost},xlabel={Iteration},xmax = 16,ymax = 140,ymin=55,xtick={0,4,8,12,16},
xticklabels={0,4,8,12,16},ytick={55,90,140},yticklabels={55,90,140},tick align=inside,
tick pos=left,mark repeat =5,mark size = 3pt]

    \addplot[dotted,very thick,blue,line width=1pt] table [search path={fig/Radial_results/Brain},x expr=\coordindex, y=QNP_125_cost, col sep=comma] {Brain_WavTVGamma_QN.csv};   
     
     \addplot[solid,very thick,red,line width=1pt] table [search path={fig/Radial_results/Brain},x expr=\coordindex, y=QNP_17_cost, col sep=comma] {Brain_WavTVGamma_QN.csv};
     
    \addplot[dashdotted,magenta,very thick,black,line width=1pt] table [search path={fig/Radial_results/Brain},x expr=\coordindex, y=QNP_20_cost, col sep=comma] {Brain_WavTVGamma_QN.csv};
    
    \addplot[dashed,very thick,darkblue,line width=1pt] table [search path={fig/Radial_results/Brain},x expr=\coordindex, y=QNP_30_cost, col sep=comma] {Brain_WavTVGamma_QN.csv};
     \legend{$1.25$, $1.7$,$2$,$3$};

   \nextgroupplot[ylabel={},xlabel={Iteration},xmin=6,xmax = 16,ymin=59.15,ymax = 60.5,,xtick={6,8,11,14,16},
xticklabels={,8,11,14,16},ytick={},yticklabels={},tick align=inside,xshift=-0.8cm,
tick pos=left,mark repeat =5,mark size = 3pt]

     \addplot[dotted,very thick,blue,line width=1pt] table [search path={fig/Radial_results/Brain},x expr=\coordindex, y=QNP_125_cost, col sep=comma] {Brain_WavTVGamma_QN.csv};   
     
     \addplot[solid,very thick,red,line width=1pt] table [search path={fig/Radial_results/Brain},x expr=\coordindex, y=QNP_17_cost, col sep=comma] {Brain_WavTVGamma_QN.csv};
     
    \addplot[dashdotted,magenta,very thick,black,line width=1pt] table [search path={fig/Radial_results/Brain},x expr=\coordindex, y=QNP_20_cost, col sep=comma] {Brain_WavTVGamma_QN.csv};
    
    \addplot[dashed,very thick,darkblue,line width=1pt] table [search path={fig/Radial_results/Brain},x expr=\coordindex, y=QNP_30_cost, col sep=comma] {Brain_WavTVGamma_QN.csv};

 \end{groupplot}
\end{tikzpicture}

\caption{
{\cb Influence of $\gamma$ on the convergence of CQNPM.
Test on the brain image with wavelet and TV regularizers and radial acquisition. We set Max\_Iter$=20$.}
}
\label{fig:RadialBrain:WavTVGamma}
\end{figure}
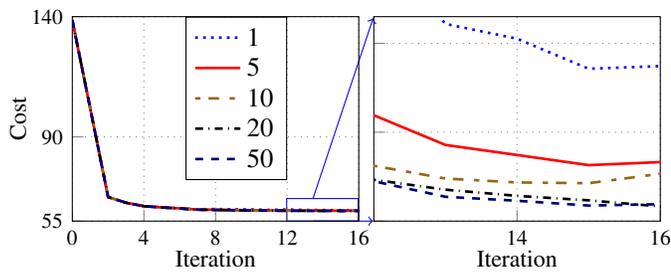
\begin{figure}[!t]
	\centering
\begin{tikzpicture}

 \draw [draw=blue] (2.85,0) rectangle (3.8,0.3);

 \draw [-to,color=blue](3.2,0.3) -- (4,2.7);
 
 \draw [-to,color=blue](3.2,0) -- (4,0);

 \pgfplotsset{every axis legend/.append style={legend pos=south east,anchor=south east,font=\normalsize, legend cell align={left}},every tick label/.append style={font=\footnotesize}}
    \pgfplotsset{grid style={dotted, gray}}
 	\begin{groupplot}[enlargelimits=false,scale only axis, group style={group size=2 by 1,x descriptions at=edge bottom,group name=mygroup},
   	width=0.21*\textwidth,
 	height=0.15*\textwidth,
 	every axis/.append style={font=\small,title style={anchor=base,yshift=-1mm}, x label style={yshift = 0.5em}, y label style={yshift = -.5em}, grid = both}, legend style={at={(0.75,1),fontsize=\tiny,text opacity = 1,fill opacity=0.6},anchor=north east}
 	]
     \nextgroupplot[ylabel={Cost},xlabel={Iteration},xmax = 16,ymax = 140,ymin=55,xtick={0,4,8,12,16},
xticklabels={0,4,8,12,16},ytick={55,90,140},yticklabels={55,90,140},tick align=inside,
tick pos=left,mark repeat =5,mark size = 3pt]

     \addplot[dotted,very thick,blue,line width=1pt] table [search path={fig/Radial_results/Brain},x expr=\coordindex, y=QNP_1_cost, col sep=comma] {Brain_WavTVInner_QN.csv};

     \addplot[solid,very thick,red,line width=1pt] table [search path={fig/Radial_results/Brain},x expr=\coordindex, y=QNP_5_cost, col sep=comma] {Brain_WavTVInner_QN.csv};
     
     \addplot[very thick,dash pattern=on 2pt off 3pt on 4pt off 4pt,goldenbrown,line width=1pt] table [search path={fig/Radial_results/Brain},x expr=\coordindex, y=QNP_10_cost, col sep=comma] {Brain_WavTVInner_QN.csv};
          
    \addplot[dashdotted,magenta,very thick,black,line width=1pt] table [search path={fig/Radial_results/Brain},x expr=\coordindex, y=QNP_20_cost, col sep=comma] {Brain_WavTVInner_QN.csv};
    
    \addplot[dashed,very thick,darkblue,line width=1pt] table [search path={fig/Radial_results/Brain},x expr=\coordindex, y=QNP_50_cost, col sep=comma] {Brain_WavTVInner_QN.csv};
     \legend{$1$, $5$,$10$,$20$,$50$};

\nextgroupplot[ylabel={},xlabel={Iteration},xmin=12,xmax = 16,ymax = 139,tick align=inside,
tick pos=left,mark repeat =5,mark size = 3pt,xtick={12,14,16},
xticklabels={,14,16},ytick={},yticklabels={},ymin=59.2,ymax=59.66,xshift=-0.8cm]

     \addplot[dotted,very thick,blue,line width=1pt] table [search path={fig/Radial_results/Brain},x expr=\coordindex, y=QNP_1_cost, col sep=comma] {Brain_WavTVInner_QN.csv};   
     \addplot[solid,very thick,red,line width=1pt] table [search path={fig/Radial_results/Brain},x expr=\coordindex, y=QNP_5_cost, col sep=comma] {Brain_WavTVInner_QN.csv};
     
     \addplot[very thick,dash pattern=on 2pt off 3pt on 4pt off 4pt,goldenbrown,line width=1pt] table [search path={fig/Radial_results/Brain},x expr=\coordindex, y=QNP_10_cost, col sep=comma] {Brain_WavTVInner_QN.csv};
          
    \addplot[dashdotted,magenta,very thick,black,line width=1pt] table [search path={fig/Radial_results/Brain},x expr=\coordindex, y=QNP_20_cost, col sep=comma] {Brain_WavTVInner_QN.csv};
    
    \addplot[dashed,very thick,darkblue,line width=1pt] table [search path={fig/Radial_results/Brain},x expr=\coordindex, y=QNP_50_cost, col sep=comma] {Brain_WavTVInner_QN.csv};

 \end{groupplot}
\end{tikzpicture}

\caption{
{\cb Influence of Max\_Iter on the convergence of CQNPM.
Test on the same problem as \Cref{fig:RadialBrain:WavTVGamma} with $\gamma=1.7$.}
}
\label{fig:RadialBrain:WavTVInner}
\end{figure}

\input{Figs/KneeWaveTVFigSpiral.tex}

\subsection{The Choice of \texorpdfstring{$\gamma$}{gamma}}
\label{sec:numericalExp:sub:gamma}
We tried several different $\gamma$ values
to study how $\gamma$ affects the convergence of CQNPM.
We reconstructed the brain image with wavelet and TV regularizers and radial acquisition.
\Cref{fig:RadialBrain:WavTVGamma} presents the results
that show that CQNPM is quite robust to different $\gamma$ values,
and $\gamma=1.7$ worked slightly better than the others.
So we simply set $\gamma=1.7$ for all experiments.

\subsection{The Choice of \texorpdfstring{Max\_Iter in \Cref{alg:FPMDualProx}}{maxiter}}
\label{sub:exp:sub:ChoiceMaxiter}
{\cb Following the setting used in \Cref{sec:numericalExp:sub:gamma}, we studied how the choice of Max\_Iter in \Cref{alg:FPMDualProx} affects the converge of CQNPM.  \Cref{fig:RadialBrain:WavTVInner} presents the cost values versus iteration with different values of Max\_Iter. Clearly we see that CQNPM is quite robust to the choice of Max\_Iter. However, a small Max\_Iter (e.g., Max\_Iter$=10$) can slightly increase the cost and Max\_Iter$=20,50$ converged faster than other vaules. In our experiments, we found that Max\_Iter$=20$ is sufficient.}

\subsection{Reconstruction with High Data Input SNR}
{\Mcb This part studies the reconstruction
for complex additive Gaussian noise with mean zero and lower variance $4\times10^{-5}$,
yielding around $30$dB data input SNR.
\Cref{fig:SpiralBrainHigh:WavTV} displays the reconstructed results using spiral acquisition and
$h(\vx)=\alpha \|\mT\vx\|_1+(1-\alpha)\mathrm{TV}(\vx)$.
The reconstructed images are much clearer than those in the low data input SNR cases.
Moreover, the convergence trends of different algorithms
are similar to those observed in low data input SNR reconstructions.
The supplementary material provides the reconstructed results of the knee image
that align with the observations made from the brain image presented here.
} 

\input{Figs/BrainWaveTVFigSpiralhighSNR.tex}


\section{Conclusions and Future Work}
\label{Sec:conclusion}

This paper proposes complex quasi-Newton proximal methods for solving \eqref{eq:mainMinWavTV}
that led to faster convergence than APMs.
By using the structure of $\mB_k$,
we develop efficient approaches for computing the WPM {\Mcb by considering wavelet and TV regularizers}.
Compared with computing the proximal mapping in APMs,
i.e., $\mB_k=\mI_N$,
the increased computational cost in computing the WPM is insignificant,
as illustrated by our comparisons in terms of CPU time.
CQNPM is appealing for large-scale problems
because CQNPM requires fewer iterations than APMs to converge,
reducing the times of computing $\nabla f(\vx)$ that it is expensive in large-scale settings.
Interestingly, in our setting,
we found the partial smoothing method worked pretty well
when both wavelet and TV regularizers are used.
So the partial smoothing approach may be a good method for solving problems with two nonsmooth terms.
{\Mcb
To adapt CQNPM to other regularizers,
one must find an efficient approach
to address the WPM for the chosen regularizer
to preserve the computational efficiency.}


Clearly, $\mB_k$ plays an important role in our algorithm
and a more accurate $\mB_k$ can accelerate the convergence further.
Since the Hessian matrix in CS MRI is known, i.e., $\mA^\mathcal H\mA$,
we plan to learn a fixed weighting $\mB$ to approximate $\mA^\mathcal H\mA$ accurately for future work.
However, $\mB$ must be easy to invert
so that $\mB$ should have some special structures, e.g., $\mB=\mD\pm\mU\mU^\mathcal H$, {\cb and finding such a $\mB$ should be computationally cheap since $\mA$ is different from each acquisition because the sensitivity mapping is patient dependent.} Moreover, with such a fixed $\mB$, we can adopt the accelerated manner used in APMs for \Cref{alg:WPMs}
and obtain an even faster algorithm than the one presented here.  
\section{Acknowledgements}
This work was funded by National Institutes of Health grant R01NS112233.

\ifCLASSOPTIONcaptionsoff 
\newpage
\fi


\appendices

\section{Proof of Proposition \ref{prop:CQNP:TVWav:Dual}}
\label{App:proof:prop:CQNP:TVWav:Dual}
Similar to \cite{beck2009fastTV} for the real case,
one can prove the following relations for complex numbers $x,y\in \mathbb C$

\begin{equation*}
    \begin{array}{c}
  \sqrt{|x|^2+|y|^2}=\max\limits_{p_1,p_2\in\mathbb C}\left\{\Re\left(p_1^*x+p_2^*y\right):|p_1|^2+|p_2|^2\leq 1\right\}  \\ [5pt]
         |x|=\max\limits_{p\in\mathbb C}\left\{\Re\left(p^*x\right): |p|\leq 1\right\} ~~~~~~~~~~~~
    \end{array}
\end{equation*}
where $^*$ denotes the conjugate operator and $\Re(\cdot)$ represents an operator to take the real part. With these relations and the definition of TV functions, we can rewrite $\mrm{TV}(\vx)$ and  $\|\mT\vx\|_1$ as
\begin{align*}
    \mrm{TV}(\vx)=&\max_{(\mP,\mQ)\in \mathcal P} \Re\left\{\mrm{vec}\left(\mathcal L\left(\mP,\mQ\right)\right)^\mathcal H\vx\right\},\\
    \|\mT\vx\|_1 =& \max\limits_{\vz\in\mathcal Z}\Re\left\{\vz^\mathcal H\mT\vx\right\},
\end{align*}
where $\mathcal P=\mathcal P_1$ (respectively, $\mathcal P_2$) for $\mathrm{TV}_{\mrm{iso}}$ (respectively, $\mathrm{TV}_{\ell_1}$). Hence, we represent \eqref{eq:TVWavReco:WPM} as   
\begin{equation}
 \min_{\vx\in\mathbb C^N}\max\limits_{\substack{\vz\in\mathcal Z \\(\mP,\mQ)\in \mathcal P}} \|\vx-\vv_k\|_{\mB_k}^2 +2\bar{\lambda} g(\vx,\vz,\mP,\mQ),\label{eq:TVWavReco:WPM:DualOrig}
\end{equation}
where $$g(\vx,\vz,\mP,\mQ)= \Re\left\{\alpha\left<\mT\vx,\vz\right>+ (1-\alpha)\mrm{vec}\left(\mathcal L\left(\mP,\mQ\right)\right)^\mathcal H\vx\right\}.$$
Reorganizing \eqref{eq:TVWavReco:WPM:DualOrig}, we get
\begin{equation}
\begin{array}{cl}
\min\limits_{\vx\in\mathbb C^N} \max\limits_{\substack{\vz\in\mathcal Z,\\(\mP,\mQ)\in\mathcal P}}&\left\|\vx-\vw_k(\vz,\mP,\mQ)\right\|_{{\mB}_k}^2-\left\|\vw_k(\vz,\mP,\mQ)
\right\|_{{\mB}_k}^2,
\label{eq:TVWavReco:WPM:DualFormI} 
\end{array}
\end{equation}
where $$\vw_k(\vz,\mP,\mQ)=\vv_k-\bar{\lambda} {\mB}_k^{-1}\left(\alpha\mT^\mathcal H\vz+(1-\alpha) \mrm{vec}\left(\mathcal L(\mP,\mQ)\right)\right).$$
Since \eqref{eq:TVWavReco:WPM:DualFormI} is convex in $\vx$ and concave in $(\vz,\mP,\mQ)$, we interchange the minimum and maximum and then get
\begin{equation}
\begin{array}{cl}
 \max\limits_{\substack{\vz\in\mathcal Z\\(\mP,\mQ)\in\mathcal P}}\min\limits_{\vx\in\mathbb C^N}&\left\|\vx-\vw_k(\vz,\mP,\mQ)\right\|_{ {\mB}_k}^2-\left\|\vw_k(\vz,\mP,\mQ)
\right\|_{{\mB}_k}^2.
\label{eq:TVWavReco:WPM:DualFormII} 
\end{array}
\end{equation}

Note that $\vx$ only appears in the first term of \eqref{eq:TVWavReco:WPM:DualFormII} so that the optimal solution of the minimum part is 
\begin{equation}
	\vx^*=\vw_k(\vz,\mP,\mQ).
	\label{eq:TVWavReco:WPM:Dual:OptSolutionInner}
\end{equation}
Substituting \eqref{eq:TVWavReco:WPM:Dual:OptSolutionInner} into \eqref{eq:TVWavReco:WPM:DualFormII},
we get the following dual problem that contains only unknown dual variables $(\vz,\mP,\mQ)$
\begin{equation}
\begin{array}{rl}
 (\vz^*,\mP^*,\mQ^*)=\argmin\limits_{\substack{\vz\in\mathcal Z,\\(\mP,\mQ)\in\mathcal P}}&\left\|\vw_k(\vz,\mP,\mQ)\right\|^2_{\mB_k}.
\label{eq:TVWavReco:WPM:DualFinal} 
\end{array}
\end{equation}
After solving \eqref{eq:TVWavReco:WPM:DualFinal},
the primal variable update is
$\vx_{k+1}=\vw_k(\vz^*,\mP^*,\mQ^*)$. This completes the proof.

\section{Proof of Lemma \ref{lemma:gradient:LipschitzConstant:dualPro}}
\label{App:proof:lemma:gradient:LipschitzConstant:dualPro}

Denote by $h(\vz,\mP,\mQ)\triangleq \left\|\vw_k(\vz,\mP,\mQ)\right\|^2_{{\mB}_k}$.
Applying the chain rule, we get  
$$
     \nabla h(\vz,\mP,\mQ)=-2\bar{\lambda} \bmat
 \alpha\mT\\
	(1-\alpha)\mathcal L^\mathcal{T}
 \emat \vw_k(\vz,\mP,\mQ).
$$
Now, we compute the Lipschitz constant of $h(\vz,\mP,\mQ)$. 
For every two pairs of $(\vz_1,\mP_1,\mQ_1)$ and $(\vz_2,\mP_2,\mQ_2)$, we have
$$
\begin{array}{l}
	\|\nabla h(\vz_1,\mP_1,\mQ_1)-\nabla h(\vz_2,\mP_2,\mQ_2)\|\\[6pt]
	~~~=2\bar{\lambda} ^2\bigg\|\bmat
 \alpha\mT\\
	(1-\alpha)\mathcal L^\mathcal{T}
 \emat \mB_k^{-1} \bmat
 \alpha\mT^\mathcal H&
	(1-\alpha)\mathcal L
 \emat \\[10pt]
 ~~~~~~~~~~~~~~~~\left[\left(\vz_1,\mP_1,\mQ_1\right)-\left(\vz_2,\mP_2,\mQ_2\right)\right]\bigg\|\\[10pt]
 
~~~\leq 2\bar{\lambda}^2\Big\|\alpha^2 \mT^\mathcal H\mT+(1-\alpha)^2\mathcal L^\mathcal T\mathcal L\Big\|\Big\|\mB_k^{-1}\Big\| \\ [10pt]
~~~~~~~~~~~~~\Big\| \left[\left(\vz_1,\mP_1,\mQ_1\right)-\left(\vz_2,\mP_2,\mQ_2\right)\right]\Big\|\\ [6pt]
~~~\leq 2\bar{\lambda}^2(\alpha^2\|\mT\|^2+(1-\alpha)^2\|\mathcal L\|^2)\sigma_{\mrm{min}} \\ [6pt]
~~~~~~~~~~~~\left\| \Big[\left(\vz_1,\mP_1,\mQ_1\right)-\left(\vz_2,\mP_2,\mQ_2\right)\Big]\right\|,
\end{array}
$$
where $\sigma_{\mrm{min}}$ is the smallest eigenvalue of $\mB_k$. With the proof of \cite[Lemma 4.2]{beck2009fastTV}, we know $\|\mathcal L\|=\sqrt{8}$ such that the Lipschitz constant of $h(\vz,\mP,\mQ)$ is $L_c=2\sigma_{\mrm{min}}
\bar{\lambda}^2(\alpha^2\|\mT\|^2+8(1-\alpha)^2)$. This completes the proof.

\section{Radial Acquisition MRI Reconstruction}

We provided the additional results with radial trajectory here. {\Cref{fig:RadialBrain:TVwavelet} describes the error maps of the zoom in regions of Fig. 6 in the main manuscript}. \Cref{fig:knee:radial:cost,fig:RadialKnee:wavelet} present the results of using wavelet regularizer with different algorithms for the knee image. Clearly, we observe similar trends as we tested on the brain image. 

\Cref{fig:knee:radial:WavTV:cost,fig:RadialKnee:WavTV} display the results of using wavelet and TV reguarizers for the knee image.
Our method again outperforms other algorithms.
{\cb \Cref{fig:knee:radial:WavTV:cost} shows that APM and CQNPM reduced the cost values
slightly faster than S-APM and S-CQNPM in terms of iterations and CPU time,
which is slightly different from our test on brain image.
However, it is unsurprising because S-APM and S-CQNPM solved the partial smoothing cost function,
rather than original one,
yet we still computed the original non-smooth cost.}

\input{Figs/KneeWaveFigRadial.tex}
\input{Figs/KneeWaveTVFigRadial.tex}
\section{Spiral Acquisition MRI Reconstruction}
We provided the reconstruction with spiral acquisition that used
$32$ interleaves, $1688$ readout points, and $12$ coils.

{\Mcb \Cref{fig:SpiralBrainHighSNR:TVwavelet} describes the error maps of the zoomed-in regions in Fig. 11 of the main manuscript}. \Cref{fig:brain:spiral:cost,fig:SpiralBrain:wavelet,fig:knee:spiral:cost,fig:SpiralKnee:wavelet} present the results of the brain and knee images with wavelet regularizer. \Cref{fig:brain:spiral:WavTV:cost,fig:SpiralBrain:WavTV} show the results of the brain image with wavelet and TV regularizers. {\Cref{fig:SpiralBrain:TVwavelet} describes the error maps of the zoom in regions of \Cref{fig:SpiralBrain:WavTV}}. {\Cref{fig:SpiralKneeHigh:WavTV} depicts the knee image with wavelet and TV regularizers on $30$dB input SNR.} Clearly, the trends are similar to the radial acquisition described in our paper. 

\input{Figs/BrainWaveFigSpiral.tex}
\input{Figs/KneeWaveFigSpiral.tex}

\input{Figs/BrainWaveTVFigSpiral.tex}

\section{Comparison with Plug-and-Play}

{\color{black}
This section compares the reconstruction performance between wavelet and total variation regularizers,
and Plug-and-Play (PnP) with a deep denoiser prior \cite{zhang2021plug} and BM3D denoiser \cite{dabov2007image}. 
To obtain the deep denoiser prior, we employed the DnCNN \cite{zhang2017beyond} denoiser,
training it on the brain dataset as described in \cite{aggarwal2018modl}. We trained on three different noise variance $\sigma=\{0.1,1,5\}$ yielding three different DnCNN denoisers. For testing, we selected one brain image from the test dataset
and generated the k-space data using the spiral trajectory outlined in the manuscript.
We then added complex i.i.d. Gaussian noise, resulting in a $\sim 30$dB data input SNR.}

{\color{black} In the reconstruction process, we applied 35 iterations for both APM and CQNPM. For the PnP, we continued the iterations until the change in PSNR between two consecutive iterations was less than $0.01$dB. For the PnP with the DnCNN denoiser, we chose the DnCNN trained with $\sigma=0.1$ as it yielded the best performance. \Cref{fig:SpiralKneeHighDeep:WavTV} presents the reconstructed images
that achieved the highest PSNR during the reconstruction process for each method.
Here,
PnP-BM3D and
the wavelet and total variation regularizers
led to higher SNR images than PnP with a deep denoiser prior.
As described in \cite{xu2020boosting},
the deep denoiser may suffer from a noise scaling problem,
which may explain its inferior SNR performance observed here.
Further investigation of this issue is reserved for future work.
}
\bibliographystyle{IEEEtran}
\bibliography{Refs}

\end{document}